\documentclass[12pt]{amsart}
\usepackage{amssymb,amsmath,amsfonts,latexsym}
\usepackage{bm, enumerate}
 \usepackage{xcolor}

\setlength{\textheight}{9in}\setlength{\textwidth}{475pt}
\oddsidemargin -0mm \evensidemargin -0mm \topmargin -0pt
\newcommand{\newsection}[1]{\setcounter{equation}{0} \section{#1}}
\setcounter{footnote}{1}

\newcommand{\bea}{\begin{eqnarray}}
	\newcommand{\eea}{\end{eqnarray}}

\newcommand{\clb}{\mathcal{B}}

\newcommand{\clh}{\mathcal{H}}

\newcommand{\D}{\mathbb{D}}
\newcommand{\N}{\mathbb{N}}
\newcommand{\C}{\mathbb{C}}

\newcommand{\raro}{\rightarrow}

\def\textmatrix#1&#2\\#3&#4\\{\bigl({#1 \atop #3}\ {#2 \atop #4}\bigr)}
\def\dispmatrix#1&#2\\#3&#4\\{\left({#1 \atop #3}\ {#2 \atop #4}\right)}
\newcommand{\be}{\begin{equation}}
	\newcommand{\ee}{\end{equation}}
\newcommand{\ben}{\begin{eqnarray*}}
	\newcommand{\een}{\end{eqnarray*}}

\newcommand{\NI}{\noindent}

\newcommand{\bi}{\begin{itemize}}
	\newcommand{\ei}{\end{itemize}}
\newcommand{\diag}{\mbox{diag}}

\newcommand\la{{\langle }}
\newcommand\ra{{\rangle}}

\newtheorem{Theorem}{\sc Theorem}[section]
\newtheorem{Lemma}[Theorem]{\sc Lemma}
\newtheorem{Proposition}[Theorem]{\sc Proposition}
\newtheorem{Corollary}[Theorem]{\sc Corollary}
\newtheorem{Example}[Theorem]{\sc Example}
\newtheorem{Remark}[Theorem]{\sc Remark}

\newtheorem{Note}[Theorem]{\sc Note}
\newtheorem{Question}{\sc Question}
\newtheorem{ass}[Theorem]{\sc Assumption}
\newtheorem{Definition}[Theorem]{\sc Definition}
\newcommand{\bt}{\begin{Theorem}}
\def\beginlem{\begin{Lemma}}
\def\beginprop{\begin{Proposition}}
\def\begincor{\begin{Corollary}}
\def\begindef{\begin{Definition}}
\def\beginexamp{\begin{Example}}
\def\beginrem{\begin{Remark}}
\def\beginq{\begin{Question}}
\def\beginass{\begin{ass}}
\def\beginnote{\begin{Note}}
\newcommand{\et}{\end{Theorem}}
\def\endlem{\end{Lemma}}
\def\endprop{\end{Proposition}}
\def\endcor{\end{Corollary}}
\def\enddef{\end{Definition}}
\def\endexamp{\end{Example}}
\def\endrem{\end{Remark}}
\def\endq{\end{Question}}
\def\endass{\end{ass}}
\def\endnote{\end{Note}}

\begin{document}

\title{Tridiagonal shifts as compact + isometry}

\author[Das]{Susmita Das}
\address{Indian Statistical Institute, Statistics and Mathematics Unit, 8th Mile, Mysore Road, Bangalore, 560059,
India}
\email{susmita.das.puremath@gmail.com}

\author[Sarkar]{Jaydeb Sarkar}
\address{Indian Statistical Institute, Statistics and Mathematics Unit, 8th Mile, Mysore Road, Bangalore, 560059,
India}
\email{jay@isibang.ac.in, jaydeb@gmail.com}


\subjclass{Primary: 46E22, 47B38, 47A55; Secondary: 47B07, 30H10, 47B37}
\keywords{Tridiagonal kernels, perturbations, compact operators, isometries, shifts}
	
\begin{abstract}
Let $\{a_n\}_{n\geq 0}$ and $\{b_n\}_{n\geq 0}$ be sequences of scalars. Suppose $a_n \neq 0$ for all $n \geq 0$. We consider the  tridiagonal kernel (also known as band kernel with bandwidth one) as
\[
k(z, w) = \sum_{n=0}^\infty ((a_n + b_n z)z^n) \overline{(({a}_n + {b}_n {w}) {w}^n)} \qquad (z, w \in \mathbb{D}),
\]
where $\mathbb{D} = \{z \in \mathbb{C}: |z| < 1\}$. Denote by $M_z$ the multiplication operator on the reproducing kernel Hilbert space corresponding to the kernel $k$. Assume that $M_z$ is left-invertible. We prove that $M_z =$ compact $+$ isometry if and only if $|\frac{b_n}{a_n}-\frac{b_{n+1}}{a_{n+1}}|\rightarrow 0$ and $|\frac{a_n}{a_{n+1}}| \rightarrow 1$.
\end{abstract}
	
\maketitle

\newsection{Introduction }\label{sec: intro}

A bounded linear operator $T$ on a Hilbert space $\clh$ ($T \in \clb(\clh)$ in short) is called \textit{semi-Fredholm} if the range space $\text{ran}T$ is closed and at least one of the spaces $\ker T$ and $\ker T^*$ is of finite dimension. If $T$ is semi-Fredholm then
\[
ind (T) = \text{dim} \ker T - \text{dim} \ker T^*,
\]
is called the \textit{index} of $T$. We shall always assume that our Hilbert spaces are separable and over $\mathbb{C}$. The starting point of our present note is the following classification of compact perturbations of isometries \cite[page 191]{Fillmore}:

\begin{Theorem}[Fillmore, Stampfli, and Williams]\label{thm: classic}
Let $T \in \clb(\clh)$. Then $T = $ compact $+$ isometry if and only if $I - T^*T$ is compact and $T$ is semi-Fredholm with $ind (T) \leq 0$.
\end{Theorem}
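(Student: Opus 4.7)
My plan is to prove both implications, with the converse being the substantive one. For the easy direction $(\Rightarrow)$, suppose $T = K + V$ with $K$ compact and $V$ an isometry. Direct expansion gives $I - T^*T = -K^*V - V^*K - K^*K$, which is compact since $\clk(\clh)$ is a two-sided $^*$-ideal. To deduce semi-Fredholmness, I would pass to the Calkin algebra $\clb(\clh)/\clk(\clh)$: the relation $\pi(T)^*\pi(T) = \pi(I)$ shows that $\pi(T)$ is left-invertible, so by Atkinson's theorem $T$ is upper semi-Fredholm. Stability of the index under compact perturbations then gives $\operatorname{ind}(T) = \operatorname{ind}(V) = -\dim \ker V^* \leq 0$.

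For the converse $(\Leftarrow)$, the idea is to build an isometry $V$ within compact distance of $T$ from the polar decomposition, followed by a finite-rank correction on the kernel. Write $T = U|T|$ with $|T| = (T^*T)^{1/2}$, where $U$ is the canonical partial isometry with initial space $(\ker T)^\perp$ and final space $\overline{\operatorname{ran} T}$. Then
\[
T - U = U(|T| - I),
\]
so it suffices to show that $|T| - I$ is compact. I would handle this through the continuous functional calculus applied to $T^*T$. Since $T^*T - I$ is compact and self-adjoint, the essential spectrum of $T^*T$ equals $\{1\}$, so the spectrum of $T^*T$ consists of $\{1\}$ together with isolated eigenvalues of finite multiplicity accumulating only at $1$. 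The continuous function $f(x) = \sqrt{x} - 1$ vanishes at $1$, so truncating on the eigenspaces exhibits $|T| - I = f(T^*T)$ as a norm limit of finite-rank operators, hence compact.

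It remains to upgrade the partial isometry $U$ to an honest isometry within finite-rank error. Upper semi-Fredholmness of $T$ gives $\dim \ker T < \infty$, and $\operatorname{ind}(T) \leq 0$ yields $\dim \ker T \leq \dim \ker T^*$, so I can fix an isometric embedding $J \colon \ker T \hookrightarrow \ker T^*$. Setting $V$ equal to $U$ on $(\ker T)^\perp$ and equal to $J$ on $\ker T$ produces a global isometry on $\clh$, because $\operatorname{ran} U \subseteq (\ker T^*)^\perp$ is orthogonal to $\operatorname{ran} J \subseteq \ker T^*$. The difference $V - U$ is supported on the finite-dimensional space $\ker T$, hence is finite-rank, and so $T - V = (T - U) - (V - U)$ is compact. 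The step I expect to require the most care is the spectral argument promoting compactness of $I - T^*T$ to compactness of $I - |T|$; the remaining arguments are structural manipulations with partial isometries and the index.
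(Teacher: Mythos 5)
Your proof is correct, but note that the paper does not actually prove Theorem \ref{thm: classic}: it is quoted from Fillmore--Stampfli--Williams, and the only argument in the paper along these lines is Proposition \ref{thm: $T = S+K$}, which treats the special case of a \emph{left-invertible} operator of finite index. Your argument proves the full general statement, and the two differ in instructive ways. In the paper's setting $T$ is injective, so the partial isometry $U$ in the polar decomposition is automatically an isometry and no correction is needed; your finite-rank modification $J\colon \ker T \hookrightarrow \ker T^*$ (using $\dim\ker T \leq \dim\ker T^* $ from $\operatorname{ind}(T)\leq 0$) is exactly the extra ingredient the general case requires, and it is carried out correctly. For the compactness of $I-|T|$ you invoke the essential spectrum of $T^*T$ and continuous functional calculus; this works, but the paper's route is more elementary and worth knowing: since $|T|\geq 0$, the operator $I+|T|$ is invertible, and $I-|T| = (I+|T|)^{-1}(I-|T|^2) = (I+|T|)^{-1}(I-T^*T)$ is manifestly compact. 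Finally, your forward direction establishes semi-Fredholmness by passing to the Calkin algebra and citing Atkinson plus stability of the index under compact perturbation --- precisely the machinery the paper remarks that its own Proposition \ref{thm: $T = S+K$} avoids (there, left-invertibility and finite index are hypotheses, so nothing needs to be proved). Two small points you may wish to make explicit: the deduction $\dim\ker T<\infty$ follows from $\operatorname{ind}(T)\leq 0$ together with the definition of semi-Fredholm (if $\dim\ker T=\infty$ then $\dim\ker T^*<\infty$ and the index would be $+\infty$), and the index stability you invoke must be the extended-integer version since $\operatorname{ind}(V)$ may equal $-\infty$.
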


In this note, we are interested in a quantitative version of the above theorem. For instance, consider a bounded sequence of non-zero scalars $\{w_n\}_{n \geq 0}$ and an infinite-dimensional Hilbert space $\clh$ with an orthonormal basis $\{e_n\}_{n\geq 0}$. Then the \textit{weighted shift} $S_w$ defined by
\[
S_w (e_n) = w_n e_{n+1} \qquad (n \geq 0),
\]
is in $\clb(\clh)$ with $\|S_w\| = \sup_{n} |w_n|$. Since $\ker S_w = \{0\}$ and $\ker S_w^* = \{e_0\}$, it follows that $S_w$ is semi-Fredholm and $\text{ind}(S_w) = -1$. Moreover, using the fact that $S_w^* e_0 = 0$ and $S_w^* e_n = \bar{w}_{n-1} e_{n-1}$, $n \geq 1$, it follows that
\[
I- S_w^*S_w = \diag (1- |w_0|^2, 1- |w_1|^2, \ldots).
\]
Theorem \ref{thm: classic} then readily implies that
\begin{equation}\label{eqn: S_w = C+I}
\lim_{n \raro \infty} |w_n| = 1 \; \text{ if and only if } \; S_w = \text{compact} + \text{isometry}.
\end{equation}
We note that in this case the weight sequence is bounded away from zero and hence $S_w$ is necessarily left-invertible.

Also note that $S_w$ is a concrete example of a left-invertible shift on an analytic Hilbert space. To be more precise, let $\D$ denote the open unit disc in $\mathbb{C}$. A function $k : \D \times \D \raro \mathbb{C}$ is called an \textit{analytic kernel} if $k$ analytic in the first variable and $k$ is positive definite, that is
\[
\sum_{i,j=1}^n c_j \bar{c}_ik(z_i, z_j) \geq 0,
\]
for all $\{z_i\}_{i=1}^n \subseteq \D$, $\{c_i\}_{i=1}^n \subseteq \mathbb{C}$ and $n \geq 1$. In this case, there exists an \textit{analytic Hilbert space} $\clh_k$ of analytic functions on $\D$ such that $\{k(\cdot, w) : w \in \D\}$ is a total set in $\clh_k$. The \textit{shift} on $\clh_k$ is defined by $M_z f = z f$, $f \in \clh_k$. We will always assume in the sequel that $M_z$ is bounded. Since $k$ is analytic in the first and (automatically) co-analytic in the second variable, it follows that
\[
k(z,w) = \sum_{m,n \geq 0} c_{mn} z^m \bar{w}^n \qquad (z, w \in \D).
\]
When $c_{mn} = 0$ for all $|m-n| \geq 2$ ($|m-n| \geq 1$), we say that $\clh_k$ is a \textit{tridiagonal space} (\textit{diagonal space}) and $k$ is a \textit{tridiagonal kernel} or \textit{band kernel with bandwidth $1$} (\textit{diagonal kernel}).

A standard computation now reveals that $S_w$, under some appropriate assumption on the weight sequence $\{w_n\}_{n\geq 0}$ \cite[proposition 7]{AS}, is unitarily equivalent to $M_z$ on a diagonal space. Therefore \eqref{eqn: S_w = C+I} yields a quantitative classification of shifts on diagonal spaces that are compact perturbations of isometries. This motivates the following natural question:

\begin{Question}\label{Quest}
Is it possible to find a quantitative classification of left-invertible shifts on analytic Hilbert spaces that are compact perturbations of isometries?
\end{Question}

The main purpose of this note is to provide an answer to the above question for the case of $M_z$ on (tractable) tridiagonal spaces. Throughout the paper, we fix sequences of scalars $\{a_n\}_{n \geq 0}$ and $\{b_n\}_{n \geq 0}$ with the assumption that $a_n \neq 0$, $n \geq 0$. We set
\[
f_n(z) = (a_n + b_n z) z^n \quad \quad (n \geq 0),
\]
and consider the Hilbert space $\clh_k$ with $\{f_n\}_{n\geq 0}$ as an orthonormal basis. Then $\clh_k$ is a tridiagonal space corresponding to the tridiagonal kernel
\[
k(z, w) = \sum_{n=0}^\infty f_n(z) \overline{f_n(w)} \qquad (z, w \in \mathbb{D}).
\]
We always assume that $\{|\frac{a_n}{a_{n+1}}|\}_{n\geq 0}$ is bounded away from zero and
\[
\sup_{n\geq 0} | \frac{a_n}{a_{n+1}}| < \infty \text{ and } \limsup_{n\geq 0} | \frac{b_n}{a_{n+1}}| < 1.
\]
The latter two assumptions ensure that $M_z$ on $\clh_k$ is bounded \cite[Theorem 5]{Adam 2001}, whereas the first assumption implies that $M_z$ is left-invertible \cite[Theorem 3.5]{Das-Sarkar}. In this case we also call $M_z$ a \textit{tridiagonal shift}.

The notion of tridiagonal shifts was introduced by Adams and McGuire \cite{Adam 2001}. A part of their motivation came from factorizations of positive operators on analytic Hilbert spaces \cite{AMP} (also see \cite{PW}). Evidently, if $b_n = 0$, then $k$ is a diagonal kernel and $M_z$ is a weighted shift on $\clh_k$. Therefore, in view of shifts on analytic Hilbert spaces, tridiagonal shifts are the ``next best'' concrete examples of shifts after weighted shifts. The following is the answer to Question \ref{Quest} for tridiagonal shifts:

\begin{Theorem}[Main result]\label{trid M_z = S+K}
Let $M_z$ be the tridiagonal shift on $\clh_k$. Then $M_z = \text{compact} + \text{isometry}$ if and only if $|\frac{a_n}{a_{n+1}}| \raro 1$ and $|\frac{b_n}{a_n}-\frac{b_{n+1}}{a_{n+1}}| \raro 0$.
\end{Theorem}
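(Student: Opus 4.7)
By Theorem \ref{thm: classic}, $M_z$ is a compact perturbation of an isometry if and only if $I-M_z^*M_z$ is compact and $M_z$ is semi-Fredholm with $\text{ind}(M_z)\leq 0$. Under our standing hypotheses $M_z$ is left-invertible, so $\ker M_z=\{0\}$ and $\text{ran}\,M_z$ is closed; and since $\clh_k$ is an analytic Hilbert space, $\ker M_z^*=\Complex\,k(\cdot,0)$ is one-dimensional. Hence $M_z$ is automatically semi-Fredholm with $\text{ind}(M_z)=-1$, and the theorem reduces to showing $I-M_z^*M_z$ is compact if and only if both numerical conditions hold. Solving $f_n=a_n z^n+b_n z^{n+1}$ for $z^{n+1}$ yields the workhorse identity
\[
M_z f_n=\frac{a_n}{a_{n+1}}\,f_{n+1}+a_n d_n\,z^{n+2}, \qquad d_n:=\frac{b_n}{a_n}-\frac{b_{n+1}}{a_{n+1}}.
\]
Let $T$ denote the weighted shift $Tf_n:=(a_n/a_{n+1})\,f_{n+1}$ and set $A:=M_z-T$, so $Af_n=a_n d_n\,z^{n+2}$.

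For the sufficiency direction, assume $|a_n/a_{n+1}|\to 1$ and $|d_n|\to 0$. Then $T=\text{compact}+\text{isometry}$ by \eqref{eqn: S_w = C+I}, so it suffices to show $A$ is compact. Expand
\[
z^{n+2}=\sum_{k\geq 0}c_{n+2,k}\,f_{n+k+2}, \qquad c_{n+2,k}:=(-1)^k\frac{b_{n+2}\cdots b_{n+k+1}}{a_{n+2}\cdots a_{n+k+2}},
\]
and decompose $A=\sum_{k\geq 0}A_k$, where $A_k$ is the shift $f_n\mapsto a_n d_n c_{n+2,k}\,f_{n+k+2}$. The boundedness of $|a_n/a_{n+1}|$ together with $\limsup|b_n/a_{n+1}|<1$ produces a uniform estimate $|a_n d_n c_{n+2,k}|\leq M_0\,r^k|d_n|$ with $r\in(0,1)$, so $\sum_k\|A_k\|<\infty$; moreover each $A_k$, being a weighted shift of step $k+2$ whose weights tend to zero, is compact. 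Hence $A$ is compact and $M_z=T+A$ is compact-plus-isometry.

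For the necessity direction, assume $I-M_z^*M_z$ is compact. Since $f_n\to 0$ weakly, $(I-M_z^*M_z)f_n\to 0$ in norm; expanding this as the sum of squared matrix entries in the $n$-th row forces every such entry to vanish. The diagonal gives
\[
\Big|\frac{a_n}{a_{n+1}}\Big|^2+|a_n d_n|^2\,\|z^{n+2}\|^2\longrightarrow 1,
\]
and the identity $\langle z^{n+2},z^{n+3}\rangle=-(b_{n+2}/a_{n+2})\|z^{n+3}\|^2$ (from $z^m=a_m^{-1}f_m-(b_m/a_m)z^{m+1}$) turns the first off-diagonal into
\[
\frac{a_n d_n}{a_{n+2}}\Big(\overline{\alpha_{n+1}}-\overline{\gamma_{n+1}}\,b_{n+2}\,\|z^{n+3}\|^2\Big)\longrightarrow 0,\qquad \alpha_m:=\frac{a_m}{a_{m+1}},\ \gamma_m:=a_m d_m.
\]
If $|d_{n_k}|\geq\delta>0$ along a subsequence, the leading factor $a_n d_n/a_{n+2}$ stays bounded below (using $c_1\leq|a_n/a_{n+1}|\leq c_2$), so the bracketed correction must tend to zero. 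A subsequential analysis---extracting convergent sub-limits of all bounded parameters and combining the resulting identity with the diagonal and, if needed, with the higher off-diagonals $\langle M_z f_n,M_z f_{n+k}\rangle\to 0$---shows that the modulus and phase constraints encoded by the bracket cannot all be satisfied while $\limsup|b_n/a_{n+1}|<1$, producing a contradiction. Hence $|d_n|\to 0$, and the diagonal then forces $|a_n/a_{n+1}|\to 1$.

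The main obstacle is this necessity direction: the diagonal relation alone cannot disentangle $|a_n/a_{n+1}|$ from $|a_n d_n|\,\|z^{n+2}\|$, so one must exploit the off-diagonal entries. The strict bound $\limsup|b_n/a_{n+1}|<1$ is essential because it keeps $\|z^{n+3}\|^2$ comparable to $|a_{n+3}|^{-2}$ and thereby controls the correction term, ensuring that the subsequential contradiction goes through.
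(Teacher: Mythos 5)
Your reduction to the compactness of $I-M_z^*M_z$ (via Theorem \ref{thm: classic} and $\mathrm{ind}(M_z)=-1$) is fine, and your sufficiency argument is correct and genuinely different from the paper's: you split $M_z=T+A$ with $Tf_n=\frac{a_n}{a_{n+1}}f_{n+1}$ a weighted shift in the basis $\{f_n\}$ and $Af_n=a_nd_nz^{n+2}$, then show $A=\sum_k A_k$ is a norm-convergent sum of compact stepped shifts using $\limsup|b_n/a_{n+1}|<1$. The paper instead works with $L_{M_z}-M_z^*$ (Proposition \ref{thm: $T = S+K$}(3)) and decomposes its matrix as $B_1U^*+B_2^*U^{*2}+B_3$; the two routes are comparable in effort, and yours has the mild advantage of exhibiting the isometric part explicitly as a weighted shift.

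The necessity direction, however, has a genuine gap, and you have located it yourself: the diagonal relation $|\frac{a_n}{a_{n+1}}|^2+|a_nd_n|^2\|z^{n+2}\|^2\raro 1$ cannot separate the two conditions, and your proposed remedy --- that the first off-diagonal entry, ``combined with a subsequential analysis \dots and, if needed, with the higher off-diagonals,'' yields a contradiction --- is an assertion that an argument exists, not an argument. The bracketed expression $\overline{\alpha_{n+1}}-\overline{\gamma_{n+1}}\,b_{n+2}\|z^{n+3}\|^2$ tending to zero along a subsequence only tells you that $|\gamma_{n+1}b_{n+2}|\,\|z^{n+3}\|^2$ is bounded below there; turning that into a contradiction forces you into the higher off-diagonals and an unbounded chain of coupled constraints, and nothing you have written shows this chain closes. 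The paper avoids the entanglement entirely by replacing $I-M_z^*M_z$ with $L_{M_z}-M_z^*$ (equivalent by Proposition \ref{thm: $T = S+K$}, since $M_z$ is left-invertible of finite index): in the explicit matrix of $L_{M_z}-M_z^*$ the quantities $c_n$ and $\frac{a_{n+2}}{a_{n+1}}-\frac{\bar a_{n+1}}{\bar a_{n+2}}$ occupy \emph{distinct entries of the same column}, so
\[
\|(L_{M_z}-M_z^*)f_{n+2}\|^2\ \geq\ |c_n|^2+\Big|\frac{a_{n+2}}{a_{n+1}}-\frac{\bar a_{n+1}}{\bar a_{n+2}}\Big|^2\ \raro\ 0
\]
forces each of the two quantities to zero separately, with no possibility of cancellation, and the two hypotheses of the theorem drop out immediately. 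To repair your proof you should either carry out the missing subsequential analysis in full or switch the necessity direction to the operator $L_{M_z}-M_z^*$, whose matrix entries are individual coefficients rather than inner products of sums.
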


In Section \ref{sec: proof}, we present the proof of the above theorem. In Section \ref{sec: Prep} we prove a key proposition that says that if $T \in \clb(\clh)$ is a left-invertible operator and if $T$ is of finite index, then $T =$ compact $+$ isometry if and only if $L_T - T^*$ is compact, where $L_T = (T^*T)^{-1} T^*$. Section \ref{sec: concl} concludes the paper with some general remarks and additional observations.

\newsection{Preparatory results}\label{sec: Prep}

The aim of this section is to prove a key result of this paper. We begin with some elementary properties of left-invertible operators. See \cite{Shim} for more on this theme. Let $T \in \clb(\clh)$ be a left-invertible operator. We use the fact that $T^*T$ is invertible to see that
\[
L_T = (T^*T)^{-1} T^*,
\]
is a left inverse of $T$. Note that $(T L_T)^2 = T L_T = (T L_T)^*)$, that is, $T L_T$ is an orthogonal projection. Moreover, if $T^*f = 0$ for some $f \in \clh$, then $(I - T L_T)f = f$. On the other hand, if $(I - T L_T)f = f$ for some $f \in \clh$, then $T L_Tf = 0$ and hence $T^*T L_T f = 0$, which implies that $T^*f = 0$. Therefore, $I - T L_T$ is the orthogonal projection onto $\ker T^*$, that is
\[
I - T L_T = P_{\ker T^*}.
\]

Part of the following is a particular case of \cite[Theorem 6.2]{Fillmore}. However, part (3) appears to be new, which will be also a key to the proof of the main theorem of this paper. For the sake of completeness, we present the argument with all details.

\begin{Proposition}\label{thm: $T = S+K$}
Let $T \in \clb(\clh)$ be left-invertible and of finite index. The following
statements are equivalent:
\begin{enumerate}
\item $T = $  compact $+$ isometry.
\item $I - T^*T$ is compact.
\item $L_T-T^*$ is compact.
\item $I - TT^*$ is compact.
\end{enumerate}	
\end{Proposition}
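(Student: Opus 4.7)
The plan is to run the cycle $(1) \Rightarrow (2) \Rightarrow (3) \Rightarrow (4) \Rightarrow (2)$. The equivalence $(1) \Leftrightarrow (2)$ is handed to us by Theorem~\ref{thm: classic}: left-invertibility forces $\ker T = \{0\}$ and closed range, and finite index means $\dim \ker T^* < \infty$, so $T$ is semi-Fredholm with $\operatorname{ind}(T) \leq 0$, and Fillmore--Stampfli--Williams applies directly. (The implication $(1) \Rightarrow (2)$ is in any case immediate on its own, since $T = K + V$ with $V$ an isometry gives $T^*T = I + V^*K + K^*V + K^*K$.) So the real content is to verify that $(2)$, $(3)$ and $(4)$ are mutually equivalent, which should reduce to algebraic identities involving $L_T$.

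For $(2) \Leftrightarrow (3)$, I would use the factorization
\[
L_T - T^* \;=\; (T^*T)^{-1}T^* - T^* \;=\; (T^*T)^{-1}(I - T^*T)\,T^*,
\]
noting that $(T^*T)^{-1}$ is bounded because $T$ is left-invertible; this delivers $(2) \Rightarrow (3)$. For the converse, the identity $L_T T = I$ gives $(L_T - T^*)T = I - T^*T$, and the two-sided ideal property of the compacts returns $(2)$.

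For $(3) \Leftrightarrow (4)$, the lever is the identity $TL_T = I - P_{\ker T^*}$ recorded just above the proposition, rearranged as
\[
I - TT^* \;=\; P_{\ker T^*} + T(L_T - T^*).
\]
Because the finite-index hypothesis makes $P_{\ker T^*}$ finite-rank, $(3) \Rightarrow (4)$ is immediate. For the reverse, the same identity gives $T(L_T - T^*) = (I - TT^*) - P_{\ker T^*}$, which is compact under $(4)$; left-multiplying by $L_T$ and invoking $L_T T = I$ yields
\[
L_T - T^* \;=\; L_T\bigl[(I - TT^*) - P_{\ker T^*}\bigr],
\]
a compact operator, whence $(3)$ holds.

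I do not foresee a genuine obstacle: each step is an algebraic identity followed by invoking that the compacts form a two-sided ideal. The finite-index hypothesis is used exactly once, to place $P_{\ker T^*}$ in that ideal, so that is the one indispensable input rather than a technical difficulty. If anything deserves emphasis it is the structural role of $(3)$ as a bridge between the ``left'' identity $I - T^*T$ and the ``right'' identity $I - TT^*$, which is presumably why the authors flag it as the novel equivalent; this middle clause is also what will be most useful for the eventual tridiagonal computation, since $L_{M_z}$ admits a transparent description on the basis $\{f_n\}_{n \geq 0}$.
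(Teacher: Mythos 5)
Your proposal is correct: every identity you write checks out, and the finite-index hypothesis is invoked exactly where it is needed (to make $P_{\ker T^*}$ finite rank). The core of your argument coincides with the paper's: the implication $(3) \Rightarrow (4)$ via $I - TT^* = P_{\ker T^*} + T(L_T - T^*)$ is literally the paper's step, and your $(4) \Rightarrow (3)$ and $(3) \Rightarrow (2)$ are minor rearrangements of the paper's $(4) \Rightarrow (2)$. Two genuine differences are worth recording. First, for $(2) \Rightarrow (3)$ you use the clean factorization $L_T - T^* = (T^*T)^{-1}(I - T^*T)T^*$, which is slightly slicker than the paper's route through $TL_T = I + (\text{finite rank})$ and, unlike the paper's, does not consume the finite-index hypothesis at that step. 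Second, and more substantively, you dispatch $(2) \Rightarrow (1)$ by citing Theorem~\ref{thm: classic}, whereas the paper deliberately gives a self-contained argument: from $I - T^*T$ compact it extracts $|T| = I + K_1$ using the invertibility of $I + |T|$, and then uses the polar decomposition $T = U|T|$ (with $U$ an isometry because $T$ is injective with closed range) to get $T = U + K_2$. The authors explicitly flag that their proof ``avoids employing the Calkin algebra method'' underlying the Fillmore--Stampfli--Williams theorem, so your version trades self-containedness for brevity; logically it is perfectly valid, since Theorem~\ref{thm: classic} is an external result and no circularity arises, but it does forfeit the independence that the paper advertises.
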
	

\begin{proof}
Throughout the following, we will designate compact operators by letters such as $K, K_1, K_2$, etc.

\noindent $(1) \Rightarrow (2)$: Suppose $T = S+K$ for some isometry $S$ on $\clh$. Then
\[
T^*T =(S+K)^*(S+K) = S^*S + K_1 = I + K_1,
\]
implies that $I - T^*T$ is compact.

\noindent $(2) \Rightarrow (3)$: Since $I - T L_T = P_{\ker T^*}$ and $\text{dim} \ker T^* < \infty$, we have $T L_T = I + K_1$. Now if $I - T^*T = K_2$, then $L_T - T^*T L_T = K_3$, and hence
\[
K_3 = L_T - T^*T L_T =  L_T - T^*(I+K_1) = L_T - T^* + K_4.
\]
This gives us $L_T - T^* = K$.

\noindent To prove $(3) \Rightarrow (4)$, assume that $L_T - T^* = K$. Then $T L_T - T T^* = K_1$. Again, since $I - T L_T = P_{\ker T^*}$ and $\text{dim} \ker T^* < \infty$, we have
\[
I - T T^* = (I - T L_T) + (T L_T - T T^*) = P_{\ker T^*}+ K_1 = K_2.
\]

\noindent $(4) \Rightarrow (2)$: Let $K = I - TT^*$. Then $T^* K = T^* - T^* TT^* = (I - T^*T)T^*$ implies that $T(I - T^* T) = K_1$, and hence
\[
I - T^* T = L_T T(I - T^*T) = L_T K_1 = K_2.
\]
$(2) \Rightarrow (1)$ Suppose $I - T^*T = K$. Since $|T|$ is positive, we see that $(I+|T|)$ is invertible. Then $K = (I+|T|)(I - |T|)$ implies that $|T| = I + K_1$. Let $T = U |T|$ be the polar decomposition of $T$. Taking the injectivity property of $T$ in account, we find that $U$ is an isometry, which implies
\[
T = U|T| = U (I + K_1) = U + K_2,
\]
and completes the proof of the proposition.
\end{proof}	

Unlike the proof of \cite{Fillmore}, the above proof avoids employing the Calkin algebra method. Of course, as pointed out earlier, the result of \cite{Fillmore} (modulo part (3)) holds without the left-invertibility assumption.

Now we turn to the tridiagonal shift $M_z$ on $\clh_k$, where
\[
k(z, w) = \sum_{n=0}^{\infty} f_n(z) \overline{f_n(w)} \qquad (z, w \in \D),
\]
and $f_n(z) = (a_n + b_nz)z^n$, $a_n, b_n \in \mathbb{C}$, $n \geq 0$. Recall that $a_n \neq 0$ for all $n \geq 0$. Moreover, by assumption, $\{| \frac{a_n}{a_{n+1}}|\}_{n\geq 0}$ is bounded away from zero and $\sup_{n\geq 0} | \frac{a_n}{a_{n+1}}| < \infty$ and $\limsup_{n\geq 0}| \frac{b_n}{a_{n+1}}| < 1$, which ensures that $M_z$ is bounded and left-invertible on $\clh_k$. It will be convenient to work with the matrix representation of $M_z$ with respect to the orthonormal basis $\{f_n\}_{n \geq 0}$. A standard computation reveals that \cite[Section 3]{Adam 2001}
\[
z^n = \frac{1}{a_n} \sum_{m=0}^\infty (-1)^m \Big(\frac{\prod_{j=0}^{m-1} b_{n+j}}{\prod_{j=0}^{m-1} a_{n+j+1}}\Big) f_{n+m} \quad \quad (n \geq 0),
\]
where $\prod_{j=0}^{-1} x_{n+j} := 1$. A new round of computation then gives
\[
M_z f_n  = \Big(\frac{a_n}{a_{n+1}}\Big) f_{n+1} + c_n \sum_{m=0}^\infty (-1)^m \Big(\frac{\prod_{j=0}^{m-1} b_{n+2+j}}{\prod_{j=0}^{m-1} a_{n+3+j}}\Big) f_{n+2+m} \quad \quad (n\geq 0),
\]
where
\begin{equation}\label{eq: c_b a}
c_n = \frac{a_n}{a_{n+2}} \Big(\frac{b_n}{a_{n}} - \frac{b_{n+1}}{a_{n+1}}\Big) \quad \quad (n \geq 0).
\end{equation}
Therefore
\begin{equation}\label{eqn: Mz}
[M_z] = \begin{bmatrix}
0& 0 & 0 & 0 & \dots
\\
\frac{a_0}{a_1} & 0 & 0 & 0 & \ddots
\\
{c_0} & \frac{a_1}{a_2} & 0 & 0 & \ddots
\\
\frac{-c_0 b_2}{a_3} & c_1 & \frac{a_2}{a_3} & 0 & \ddots
\\
\frac{c_0b_2b_3}{a_3a_4} &\frac{-c_1b_3}{a_4} & c_2 & \frac{a_3}{a_4} & \ddots
\\
\frac{-c_0b_2b_3b_4}{a_3a_4a_5} &\frac{c_1b_3b_4}{a_4a_5} & \frac{-c_2b_4}{a_5} & c_3 & \ddots
\\
\vdots & \vdots & \vdots&\ddots &\ddots
\end{bmatrix},
\end{equation}
with respect to the orthonormal basis $\{f_n\}_{n \geq 0}$ \cite[Page 729]{Adam 2001}.

\newsection{Proof of the main theorem}\label{sec: proof}

Now we are ready to prove the main result of this paper. Throughout the proof, we will frequently use matrix representations of bounded linear operators on the tridiagonal space (as well as subspaces of) $\clh_k$ as in \eqref{eqn: Mz}.

\begin{proof}[Proof of Theorem \ref{trid M_z = S+K}]
Since $\ker M_z^* = \mathbb{C}f_0$, we see that $\text{ind} (M_z) = -1$. Using the left-invertibility of $M_z$ applied to Proposition \ref{thm: $T = S+K$}, we see that $M_z = $ isometry + compact if and only if $L_{M_z} - M_z^*$ is compact. By \eqref{eqn: Mz}, the matrix representation of $M_z^*$ is given by
\begin{equation}\label{eqn: Mz^* matrix}
[M^*_z] = \begin{bmatrix}
0& \frac{\bar{a}_0}{\bar{a}_1} & \bar{c}_0 & \frac{-\bar{c}_0 \bar{b}_2}{\bar{a}_3} & \frac{\bar{c}_0 \bar{b}_2 \bar{b}_3}{\bar{a}_3 \bar{a}_4}  & \dots
\\
0& 0& \frac{\bar{a}_1}{\bar{a}_2} & \bar{c}_1 & \frac{-\bar{c}_1 \bar{b}_3}{\bar{a}_4} & \ddots
\\
0 & 0 & 0 & \frac{\bar{a}_2}{\bar{a}_3} & \bar{c}_2 & \ddots
\\
0& 0 & 0 & 0 & \frac{\bar{a}_3}{\bar{a}_4} & \ddots
\\
\vdots & \vdots & \vdots&\vdots &\ddots &\ddots
\end{bmatrix}.
\end{equation}
Recall that $L_{M_z} = (M_z^*M_z)^{-1}M_z^*$ is a left-inverse of $M_z$. It follows that the matrix representation of $L_{M_z}$ with respect to the orthonormal basis $\{f_n\}_{n \geq 0}$ \cite[Theorem 3.5]{Das-Sarkar} is given by
\[
[L_{M_z}] = \begin{bmatrix}
0 & \frac{a_1}{a_0} & 0 & 0  & 0 & \dots
\\
0 & {d_1} & \frac{a_2}{a_1} & 0  & 0 & \ddots
\\
0 & \frac{-d_1b_1}{a_2} & d_2 & \frac{a_3}{a_2} & 0 & \ddots
\\
0 & \frac{d_1b_1b_2}{a_2a_3} &\frac{-d_2b_2}{a_3} & d_3 &\frac{a_4}{a_3} & \ddots
\\
0 & \frac{-d_1b_1b_2b_3}{a_2a_3a_4} &\frac{d_2b_2b_3}{a_3a_4} & \frac{-d_3b_3}{a_4} & d_4 & \ddots
\\
\vdots & \vdots & \vdots&\vdots &\ddots &\ddots
\end{bmatrix},
\]
where $d_n = \frac{b_n}{a_n} - \frac{b_{n-1}}{a_{n-1}}$ for all $n \geq 1$. Therefore, we have the following matrix representation of $L_{M_z} - M_z^*$:
\[
[L_{M_z} -M_z^*] = \begin{bmatrix}
0 & (\frac{a_1}{a_0}-\frac{\bar{a}_0}{\bar{a}_1}) & -\bar{c}_0 & \frac{\bar{c}_0{\bar{b}_2}}{\bar{a}_3} & -\frac{\bar{c}_0 \bar{b}_2 \bar{b}_3}{\bar{a}_3\bar{a}_4} & \dots
\\
0 & {d_1} & (\frac{a_2}{a_1}-\frac{\bar{a}_1}{\bar{a}_2}) & -\bar{c}_1 & \frac{\bar{c}_1 {\bar{b}_3}}{\bar{a}_4}  & \ddots
\\
0 & \frac{-d_1b_1}{a_2} & d_2 & (\frac{a_3}{a_2}-\frac{\bar{a}_2}{\bar{a}_3}) & -\bar{c}_2 & \ddots
\\
0 & \frac{d_1b_1b_2}{a_2a_3} &\frac{-d_2b_2}{a_3} & d_3 & (\frac{a_4}{a_3}-\frac{\bar{a}_3}{\bar{a}_4}) & \ddots
\\
0 & \frac{-d_1b_1b_2b_3}{a_2a_3a_4} &\frac{d_2b_2b_3}{a_3a_4} & \frac{-d_3b_3}{a_4} & d_4 & \ddots
\\
\vdots & \vdots & \vdots&\vdots &\ddots &\ddots
\end{bmatrix}.
\]
Finally, by \eqref{eq: c_b a}, we see that $c_n = \frac{a_n}{a_{n+2}} (\frac{b_n}{a_{n}} - \frac{b_{n+1}}{a_{n+1}})$ for all $n \geq 0$, and hence
\begin{equation}\label{eqn: d =c}
d_{n+1}= - \frac{a_{n+2}}{a_n}c_n \qquad (n \geq 0).
\end{equation}
Now suppose that $L_{M_z} -M_z^*$ is compact. Since $\{f_n\}_{n \geq 0}$ is an orthonormal basis of $\clh_k$, a well-known property of compact operators on Hilbert spaces implies that
\[
\|(L_{M_z} - M_z^*) f_n\| \raro 0 \text{ as } n \raro \infty.
\]
For each $n \geq 1$, use the matrix representation of $L_{M_z} -M_z^*$ to see that
\[
\begin{split}
\|(L_{M_z} -M_z^*)f_{n+2}\|^2 = & \Big|\frac{c_0 b_2 b_3\cdots b_{n+1}}{a_3a_4a_5 \cdots a_{n+2}}\Big|^2 + \cdots + \Big|\frac{c_{n-1} b_{n+1}}{a_{n+2}}\Big|^2 + |c_n|^2
\\
& + \Big|\frac{a_{n+2}}{a_{n+1}}-\frac{\bar{a}_{n+1}}{\bar{a}_{n+2}}\Big|^2+ |d_{n+2}|^2 + \cdots.
\end{split}
\]
In particular
\[
\|(L_{M_z} -M_z^*)f_{n+2}\|^2 \geq |c_n|^2 + \Big|\frac{a_{n+2}}{a_{n+1}}-\frac{\bar{a}_{n+1}}{\bar{a}_{n+2}}\Big|^2 \qquad (n \geq 1),
\]
and hence, $|c_n| \raro 0$ and $|\frac{a_{n+2}}{a_{n+1}}-\frac{\bar{a}_{n+1}}{\bar{a}_{n+2}}| \raro 0$ as $n \raro \infty$. Then we have
\[
\Big| \Big|\frac{a_{n+1}}{a_{n+2}}\Big|^2 - 1 \Big| = \Big|\frac{a_{n+1}}{a_{n+2}}\Big| \Big|\frac{\bar{a}_{n+1}}{\bar{a}_{n+2}} - \frac{a_{n+2}}{a_{n+1}}\Big| \leq \Big|\frac{\bar{a}_{n+1}}{\bar{a}_{n+2}} - \frac{a_{n+2}}{a_{n+1}}\Big| \Big(\sup_m\Big|{\frac{a_m}{a_{m+1}}}\Big|\Big),
\]
and hence $|\frac{a_n}{a_{n+1}}| \raro 1$. Finally, $|c_n| \raro 0$ (see the definition of $c_n$ in \eqref{eq: c_b a}) and the fact that $\{\frac{a_{n}}{a_{n+2}}\}_{n \geq 0}$ is bounded imply that $|\frac{b_n}{a_n} - \frac{b_{n+1}}{a_{n+1}}| \raro 0$.

For the converse direction, we assume that $|\frac{a_n}{a_{n+1}}| \raro 1$ and $|\frac{b_n}{a_n} - \frac{b_{n+1}}{a_{n+1}}| \raro 0$. Taken together, these conditions mean that $|c_n| \raro 0$ (see \eqref{eq: c_b a}). We claim that $L_{M_z} - M_z^*$ is compact. To prove this, we first let $(\mathbb{C} f_0)^\perp = \clh$. Then, with respect to
\[
\clh_k = \C f_0 \oplus \clh,
\]
the operator $L_{M_z}-M_z^*$ can be represented as
\[
L_{M_z}-M_z^* = \begin{bmatrix}
0 & A
\\
0 & B
\end{bmatrix},
\]
where $A = P_{\mathbb{C}f_0}(L_{M_z}-M_z^*)|_{\clh}$ and $B = P_{\clh}(L_{M_z}-M_z^*)|_{\clh}$. Thus we only have to worry about the compactness of $B$. To this end, we consider the matrix representation of $B$ with respect to the orthonormal basis $\{f_n\}_{n \geq 1}$ as
\[
[B] = \begin{bmatrix}
{d_1} & (\frac{a_2}{a_1}-\frac{\bar{a}_1}{\bar{a}_2}) & -\bar{c}_1 & \frac{\bar{c}_1 {\bar{b}_3}}{\bar{a}_4}  & \ddots
\\
\frac{-d_1b_1}{a_2} & d_2 & (\frac{a_3}{a_2}-\frac{\bar{a}_2}{\bar{a}_3}) & -\bar{c}_2 & \ddots
\\
\frac{d_1b_1b_2}{a_2a_3} &\frac{-d_2b_2}{a_3} & d_3 & (\frac{a_4}{a_3}-\frac{\bar{a}_3}{\bar{a}_4}) & \ddots
\\
\frac{-d_1b_1b_2b_3}{a_2a_3a_4} &\frac{d_2b_2b_3}{a_3a_4} & \frac{-d_3b_3}{a_4} & d_4 & \ddots
\\
\vdots & \vdots & \vdots&\vdots &\ddots
\end{bmatrix}.
\]
In view of the above matrix representation, we define linear operators $B_1$, $B_2$ and $B_3$ on $\clh$, which admit the following matrix representations:
\[
[B_1] = \diag \Big( \frac{a_2}{a_1}-\frac{\bar{a}_1}{\bar{a}_2},\frac{a_3}{a_2}-\frac{\bar{a}_2}{\bar{a}_3}, \ldots \Big),
\]
and
\[
[B_2] = \begin{bmatrix}
-c_1 & 0 & 0 & 0 & \ddots
\\
\frac{c_1b_3}{a_4} & -c_2 & 0 & 0 & \ddots
\\
\frac{-c_1b_3b_4}{a_4a_5} & \frac{c_2b_4}{a_5} & -c_3 & 0 & \ddots
\\

\frac{c_1 b_3b_4b_5}{a_4a_5a_6} &\frac{-c_2 b_4b_5}{a_5a_6} & \frac{c_3b_5}{a_6} & -c_4 & \ddots
\\
\vdots & \vdots & \vdots&\ddots &\ddots
\end{bmatrix}
\text{ and }
[B_3] = \begin{bmatrix}
{d_1} & 0 & 0  & 0 & \ddots
\\
\frac{-d_1b_1}{a_2} & d_2 & 0 & 0 & \ddots
\\
\frac{d_1b_1b_2}{a_2a_3} &\frac{-d_2b_2}{a_3} & d_3 & 0 & \ddots
\\
\frac{-d_1b_1b_2b_3}{a_2a_3a_4} &\frac{d_2b_2b_3}{a_3a_4} & \frac{-d_3b_3}{a_4} & d_4 & \ddots
\\
\vdots & \vdots & \vdots&\vdots &\ddots
\end{bmatrix}.
\]
Assume for a moment that $B_1, B_2$ and $B_3$ are compact. Denote by $U$ the unilateral shift on $\clh$ corresponding to the orthonormal basis $\{f_n\}_{n \geq 1}$. In other words, $U f_n = f_{n+1}$ for all $n \geq 1$. Then
\[
B = B_1 U^* + B_2^* U^{*2} + B_3.
\]
Clearly, this would imply that $B$ is compact. Therefore, it suffices to show that $B_1, B_2$ and $B_3$ are compact operators. Note that there exist $\epsilon > 0$ and $M > 0$ such that
\begin{equation}\label{eqn: estimate of a/a}
\epsilon < \Big|\frac{a_n}{a_{n+1}}\Big| < M.
\end{equation}
Then
\[
\Big|\frac{a_{n+1}}{a_n} - \frac{\bar{a}_n}{\bar{a}_{n+1}} \Big| = \Big|\frac{a_{n+1}}{a_n}\Big(1- \Big|\frac{a_n}{a_{n+1}}\Big|^2\Big)\Big| < \frac{1}{\epsilon}\Big|1- \Big|\frac{a_n}{a_{n+1}}\Big|^2\Big|,
\]
implies that the sequence $\{|\frac{a_{n+1}}{a_n} - \frac{\bar{a}_n}{\bar{a}_{n+1}}|\}_{n \geq 0}$ converges to zero, which proves that $B_1$ is compact.

\NI We now prove that $B_2$ is compact. Since $\limsup |\frac{b_n}{a_{n+1}}|<1$, there exist $r \in (0,1)$ and $n_0 \in \N$ such that
\[
\Big|\frac{b_n}{a_{n+1}}\Big|< r \qquad (n \geq n_0).
\]	
Write
\[
\clh = (\bigoplus_{p=1}^{n_0-1} f_p) \oplus (\bigoplus_{q=0}^{\infty} f_{n_0+q}),
\]
and, with respect to this orthogonal decomposition, we let
\[
B_2 = \begin{bmatrix}
A_1 & 0
\\
A_3 & A_2
\end{bmatrix}.
\]
It is now enough to prove that $A_2$ acting on the infinite dimensional space $\oplus_{q=0}^{\infty} f_{n_0+q}$ is compact. Note
\[
[A_2] = \begin{bmatrix}
-c_{n_0} & 0 & 0 & \ddots
\\
\frac{c_{n_0} b_{n_0 + 2}}{a_{n_0 + 3}} & -c_{n_0 + 1} & 0 &  \ddots
\\
\frac{-c_{n_0} b_{n_0 + 2} b_{n_0 + 3}}{a_{n_0 +3}a_{n_0 +4}} &\frac{c_{n_0 + 1}b_{n_0 + 3}}{a_{n_0 + 4}} & -c_{n_0 + 2} &  \ddots
\\
\vdots & \vdots & \vdots &\ddots
\\
(-1)^n\frac{c_{n_0} b_{n_0 +2} \cdots b_{n_0 + n}}{a_{n_0 + 3}\cdots a_{n_0 + n + 1}} &(-1)^{n-1}\frac{c_{n_0 + 1}b_{n_0 + 3}\cdots b_{n_0 + n}}{a_{n_0 + 4}\cdots a_{n_0 + n + 1}} & (-1)^{n-2}\frac{c_{n_0 + 2}b_{n_0 + 4}\cdots b_{n_0 + n}}{a_{n_0 + 5}\cdots a_{n_0 + n + 1}} & \ddots
\\
\vdots & \vdots & \vdots &\ddots
\end{bmatrix}.
\]
Denote by $W_{n_0}$ the bounded weighted shift on $\oplus_{q=0}^{\infty} f_{n_0+q}$ with weight sequence $\{\frac{b_{n_0+n}}{a_{n_0 + n + 1}}\}_{n\geq 2}$, that is
\[
[W_{n_0}] = \begin{bmatrix}
0& 0 & 0 & 0 & \dots
\\
\frac{b_{n_0+2}}{a_{n_0+3}} & 0 & 0 & 0 & \ddots
\\
0 & \frac{b_{n_0+3}}{a_{n_0+4}} & 0 & 0 & \ddots
\\
0 & 0 & \frac{b_{n_0+4}}{a_{n_0+5}} & 0 &  \ddots
\\
\vdots & \vdots & \vdots & \ddots & \ddots
\end{bmatrix},
\]	
and write
\[
D_{n_0} = \diag (-c_{n_0} , -c_{n_0 + 1} , -c_{n_0 + 2}, \cdots).
\]
Suppose $M_0:=\sup_{n \geq 0} |c_n|$. Then
\[
\|D_{n_0}\| = \sup_{n \geq n_0}|c_n| \leq \sup_{n \geq 0}|c_n| = M_0,
\]
and, by the fact that $c_n \raro 0$, it follows that $D_{n_0}$ is a compact operator. Moreover, $A_2$ can be rewritten as
\[
A_2 = D_{n_0} - W_{n_0} D_{n_0} + W_{n_0}^2 D_{n_0}+ \cdots = \sum_{n=0}^{\infty} (-1)^n W_{n_0}^n D_{n_0}.
\]	
Clearly, $W_{n_0}^n D_{n_0}$ is compact for all $n \geq 0$, and, for $m \geq 2$, we have
\[
\begin{split}
\|W_{n_0}^m\| & \leq \sup_{l \geq 0}\Big|\frac{b_{n_0+2+l}b_{n_0+3+l}\cdots b_{n_0+m+l+1}}{a_{n_0+3+l}a_{n_0+4+l}\cdots a_{n_0+m+l+2}}\Big|
\\
& \leq r^m.
\end{split}
\]
Finally, consider the sequence $\{S_n\}_{n \geq 1}$ of partial sums of compact operators, where $S_n = \sum_{m=0}^{n} (-1)^m W_{n_0}^m D_{n_0}$ for all $n \geq 1$. Then
\[
\begin{split}
	\|A_2- S_n\| & = \|(-1)^{n+1}W_{n_0}^{n+1} D_{n_0} + (-1)^{n+2} W_{n_0}^{n+2} D_{n_0} + (-1)^{n+3} W_{n_0}^{n+3} D_{n_0} + \cdots\|
\\
& \leq M_0 \sum_{m=1}^{\infty}r^{n+m}
\\
& = M_1 r^n,
\end{split}
\]	
for some $M_1>0$ (as $0 < r  < 1$), and hence $A_2$ is the norm limit of a sequence of compact operators. This completes the proof of the fact that $B_2$ is compact.

\NI It remains to prove that $B_3$ is compact. First note that $d_{n+1}= - \frac{a_{n+2}}{a_n}c_n$ for all $n \geq 0$ (see \eqref{eqn: d =c}). The estimate \eqref{eqn: estimate of a/a} then implies that $c_n \raro 0$ if and only if $d_n \raro 0$. In particular, we may assume that $d_n \raro 0$. We are now in a similar situation as in the proof of the compactness of $B_2$. The proof of the fact that $B_3$ is compact now follows similarly as in the case of $B_2$.
\end{proof}	

\begin{Remark}
Note that if the sequence $\{\frac{b_n}{a_n}\}_{n\geq 0}$ is convergent, then $|\frac{b_n}{a_n} - \frac{b_{n+1}}{a_{n+1}}| \raro 0$. But the converse, evidently, is not true.
\end{Remark}

Note that if $b_n = 0$ for all $n \geq 0$, then $\clh_k$ is a diagonal space and $M_z$ on $\clh_k$ is a weighted shift. So in this case, Theorem \ref{trid M_z = S+K} recovers the classification of (the reproducing kernel version of) weighted shifts as obtained earlier in \eqref{eqn: S_w = C+I}. We refer the reader to \cite{AS} for the transition between weighted shifts and shifts on reproducing kernel Hilbert spaces.

\newsection{Concluding remarks}\label{sec: concl}

Let us now return to the general question (cf. Question \ref{Quest}) of quantitative classification of left-invertible shifts that are compact perturbations of isometries. Clearly, the equivalence in \eqref{eqn: S_w = C+I} and Theorem \ref{trid M_z = S+K} yields a complete answer to this question for the case of weighted shifts and tridiagonal shifts, respectively. In particular, if $M_z$ is the Bergman shift, or the weighted Bergman shift, or the Dirichlet shift, then \eqref{eqn: S_w = C+I} implies that $M_z=$ compact $+$ isometry.

However, unlike the diagonal case, it is not yet completely clear to us how to directly relate the kernel $k$ of the tridiagonal space $\clh_k$ to the conclusion of Theorem \ref{trid M_z = S+K}. In other words, our answer to Question \ref{Quest} for the tridiagonal case does not seem to indicate a comprehensive understanding (if any) of the general question.

To conclude this paper, we offer a general (but still abstract) classification of shifts that are compact perturbations of isometries. The proof is essentially a variant of Proposition \ref{thm: $T = S+K$}.

\begin{Proposition}\label{prop: factor}
Let $\clh_k$ be an analytic Hilbert space. Suppose the shift $M_z$ on $\clh_k$ is left-invertible and of finite index. Define $C$ on $\clh_k$ by
\[
(C f)(w) = \la f , (1-z\bar{w})k(\cdot, w) \ra_{\clh_k} \qquad (f \in \clh_k, w \in \D).
\]
Then $M_z =$ compact $+$ isometry if and only if $C$ defines a compact operator on $\clh_k$.
\end{Proposition}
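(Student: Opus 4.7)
The plan is to identify $C$ with a concrete and familiar operator, after which the equivalence reduces immediately to Proposition \ref{thm: $T = S+K$}. Specifically, I expect to show that $C = I - M_z M_z^*$; given this, part (1) $\Leftrightarrow$ part (4) of Proposition \ref{thm: $T = S+K$} applied to $T = M_z$ (which is left-invertible and of finite index by hypothesis) immediately yields the claim.

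First I would compute $(Cf)(w)$ directly from the defining formula using the reproducing property of $k$. Since $M_z$ is bounded on $\clh_k$, the function $(1-z\bar w)k(\cdot, w) = k(\cdot, w) - \bar w\, M_z k(\cdot, w)$ lies in $\clh_k$, so the inner product in the definition of $C$ is well defined. Splitting the inner product and using conjugate-linearity in the second slot (which turns the $\bar w$ factor into $w$), I get
\[
(Cf)(w) = \la f, k(\cdot, w)\ra - w\,\la f, M_z k(\cdot, w)\ra = f(w) - w\,\la M_z^* f, k(\cdot, w)\ra = f(w) - w\,(M_z^* f)(w).
\]
Since $M_z$ is multiplication by the independent variable, $w(M_z^* f)(w) = (M_z M_z^* f)(w)$, and therefore
\[
(Cf)(w) = \bigl((I - M_z M_z^*)f\bigr)(w) \qquad (w \in \D).
\]
As this holds for every $w \in \D$ and every $f \in \clh_k$, it follows that $C = I - M_z M_z^*$ as operators on $\clh_k$.

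Once the identity $C = I - M_z M_z^*$ is in hand, Proposition \ref{thm: $T = S+K$} finishes the proof: under the standing assumptions that $M_z$ is left-invertible and of finite index, $M_z = $ compact $+$ isometry is equivalent to the compactness of $I - M_z M_z^*$, i.e.\ to the compactness of $C$. The only subtle point in the argument is the correct handling of the $\bar{w}$ scalar under conjugate-linearity of the inner product; all other steps are routine applications of the reproducing property and the previously established proposition. Accordingly, I would not expect any serious obstacle in this proof.
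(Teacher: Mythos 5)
Your proposal is correct and follows essentially the same route as the paper: both identify $C$ with $I - M_z M_z^*$ via the reproducing property and then invoke the equivalence of (1) and (4) in Proposition \ref{thm: $T = S+K$}. The only cosmetic difference is that you split the inner product and move $M_z$ across as $M_z^*$, while the paper first computes $M_z^* k(\cdot,w) = \bar{w}\,k(\cdot,w)$ and uses the self-adjointness of $I - M_z M_z^*$; the two computations are equivalent.
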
	
\begin{proof}
Since $M_z$ is left-invertible, the index of $M_z$ is negative. We know that  $M_z = \text{isometry}+ \text{compact}$ if and only if $I-M_zM_z^*$ is compact (Proposition \ref{thm: $T = S+K$}). A standard (and well known) computation shows that
\[
M_z^* k(\cdot, w) = \bar{w} k(\cdot, w) \qquad (w \in \D).
\]
Then
\[
(I -M_zM_z^*)k(\cdot, w) = (1 - z \bar{w}) k(\cdot, w) \qquad (w \in \D).
\]
For each $f \in \clh_k$ and $w \in \D$, we have $((I - M_zM_z^*)f)(w) = \la(I - M_zM_z^*)f , k(\cdot, w) \ra_{\clh_k} $, and hence
\[
((I - M_zM_z^*)f)(w) = \la f , (I - M_zM_z^*)k(\cdot, w)\ra_{\clh_k} = \la f , (1-z\bar{w}) k(\cdot, w)\ra_{\clh_k},
\]
which implies that $(I - M_zM_z^*)f = Cf$. This completes the proof.
\end{proof}	

On one hand, the above proposition is an effective tool for weighted shifts (the easy case, cf. \eqref{eqn: S_w = C+I}). For example, if $k$ is a diagonal kernel and
\[
k(z,w) = \frac{1}{1-z\bar{w}} \tilde{k}(z,w) \qquad (z,w \in \D),
\]
for some diagonal kernel $\tilde{k}$, then Proposition \ref{prop: factor} provides a definite criterion for answering Question \ref{Quest}. This is exactly the case with the Bergman and the weighted Bergman kernels. On the other hand, a quick inspection reveals that the (matrix) representation of $M_z M_z^*$ for a tridiagonal shift $M_z$ is rather complicated and the above proposition is less effective in drawing the conclusion as we did in Theorem \ref{trid M_z = S+K}.

Finally, it is worth pointing out that often Berezin symbols play an important role in proving compactness of linear operators on analytic Hilbert spaces \cite{Nordgren}. See \cite{Axler, Stroethoff, Suarez} and also \cite{Chalendar} for recent accounts on the theory Berezin symbols on analytic Hilbert spaces. However, in the present context, it is not clear what is the connection between Berezin symbols and compact perturbations of isometries.

\vspace{0.1in}

\noindent\textbf{Acknowledgement:}
The research of the second named author is supported in part by Core Research Grant, File No: CRG/2019/000908, by the Science and Engineering Research Board (SERB), Department of Science \& Technology (DST), Government of India.

\bibliographystyle{amsplain}

\end{document}